\journal{Journal of Mathematical Analysis and Applications}
\theoremstyle{plain}
\newtheorem{theorem}{Theorem}
\newtheorem{lemma}{Lemma}
\newtheorem{corollary}{Corollary}
\newtheorem{definition}{Definition}
\newtheorem{conjecture}{Conjecture}
\newcommand{\N}{\mathbb{N}}
\newcommand{\R}{\mathbb{R}}
\newcommand{\EE}{\mathbb{E}}
\newcommand{\bb}[1]{\boldsymbol{#1}}
\newcommand{\rd}{{\rm d}}
\begin{document}

\begin{frontmatter}

\title{Miscellaneous results related to the Gaussian product inequality conjecture\\for the joint distribution of traces of Wishart matrices}%

\author[a1]{Christian Genest\texorpdfstring{}{)}}\ead{christian.genest@mcgill.ca}%
\author[a1,a2]{Fr\'ed\'eric Ouimet\texorpdfstring{}{)}}\ead{frederic.ouimet@umontreal.ca}%

\address[a1]{Department of Mathematics and Statistics, McGill University, Montr\'eal (Qu\'ebec) Canada H3A 0B9}%
\address[a2]{Centre de recherches math\'ematiques, Universit\'e de Montr\'eal, Montr\'eal (Qu\'ebec) Canada H3T 1J4}%

\begin{abstract}
This note reports partial results related to the Gaussian product inequality (GPI) conjecture for the joint distribution of traces of Wishart matrices. In particular, several GPI-related results from \citet{MR3278931} and \citet{MR3608204} are extended in two ways: by replacing the power functions with more general classes of functions, and by replacing the usual Gaussian and multivariate gamma distributional assumptions by the more general trace-Wishart distribution assumption. These findings suggest that a Kronecker product form of the GPI holds for diagonal blocks of any Wishart distribution.
\end{abstract}

\begin{keyword}
Bernstein function \sep complete monotonicity \sep Gaussian product inequality \sep Kronecker product \sep Laplace exponent \sep multivariate gamma distribution \sep multivariate Laplace transform order \sep trace \sep Wishart distribution
\MSC[2020]{Primary: 60E15; Secondary 26A48, 44A10, 62E15, 62H10, 62H12}
\end{keyword}

\end{frontmatter}

\section{Introduction\label{sec:1}}

Let $\bb{Z} = (Z_1,\dots, Z_d)$ be a centered Gaussian random vector in dimension $d \ge 2$. The strong Gaussian product inequality (GPI) conjecture states that for all reals $\alpha_1, \ldots, \alpha_d \in [0, \infty)$ and any integer $d_1\in \{1, \ldots, d-1 \}$, one has
\begin{equation}
\label{eq:1}
\EE \left(\prod_{i=1}^d |Z_i|^{2 \alpha_i}\right) \geq \EE \left(\prod_{i=1}^{d_1} |Z_i|^{2 \alpha_i}\right) \EE \left(\prod_{i=d_1+1}^d |Z_i|^{2 \alpha_i}\right).
\end{equation}

This inequality is known to hold whenever $|\bb{Z}| = (|Z_1|, \ldots, |Z_d|)$ is multivariate totally positive of order $2$ ($\mathrm{MTP}_2$) on the set $[0,\infty)^d$; see Corollary~1.1 of \citet{MR628759} and references therein. In particular, it is always true in dimension $d = 2$ because if $(Z_1, Z_2)$ is a Gaussian random pair, then $(|Z_1|, |Z_2|)$ is necessarily $\mathrm{MTP}_2$ by Remark~1.4 of \citet{MR628759}. \citet{MR4471184} further showed, using a moment formula from \citet{MR0045347}, that the reverse inequality in~\eqref{eq:1} is valid when $d = 2$ and $(\alpha_1,\alpha_2)\in (-1,0] \times [0,\infty)$, thereby completing the study of the GPI conjecture in the bivariate case.

In general dimension $d \ge 3$, the conditions under which inequality~\eqref{eq:1} holds are still unknown, which justifies the (loose) use of the term conjecture. While counterexamples exist when $d = 3$ both when $\bb{Z}$ is singular \citep{MR4052574} or not \citep{arXiv:2204.06220}, \citet{MR4445681} proved inequality~\eqref{eq:1} for all $\alpha_1, \ldots, \alpha_d \in \N_0 = \{ 0 \} \cup \N = \{0, 1, \ldots \}$ when the covariance matrix only has nonnegative entries, using an Isserlis--Wick type formula due to \citet{MR3324071,arXiv:1705.00163}; see also Corollary~1 of \citet{arXiv:2202.00189} and Remark~2.3 of \citet{arXiv:2204.06220}.

In their paper, \citet{arXiv:2204.06220} also established the analog of inequality~\eqref{eq:1} for all $\alpha_1, \ldots, \alpha_d \in \N_0$ when the variables $Z_1^2, \ldots, Z_d^2$ are replaced by the components of a random vector $\bb{X} = (X_1, \ldots, X_d)$ with multivariate gamma distribution in the sense of \citet{MR44790} whose covariance matrix $\Sigma$ is such that there exists a signature matrix $S$ for which all entries of $S\Sigma S$ are nonnegative.

Put differently, the random vector $\bb{X}$ in the work of \citet{arXiv:2204.06220} consists of the diagonal elements of a $\mathrm{Wishart}_d (2\alpha, \Sigma/2)$ matrix with $2 \alpha \in \N \cup (d - 1, \infty)$ and a symmetric positive semidefinite (SPSD) matrix~$\Sigma$. See, e.g., \citet{MR3325368} for a survey of the properties of this class of distributions. In particular, the Laplace transform of $\bb{X}$ is given, for all vectors $\bb{t} \in [0,\infty)^d$, by
\begin{equation}
\label{eq:2}
\EE \big( e ^{-\bb{t}^{\top} \bb{X}} \big) = | \mathrm{I}_d + \mathrm{diag} (\bb{t}) \, \Sigma |^{-\alpha} ,
\end{equation}
where $\mathrm{I}_d$ denotes the identity matrix of size $d \times d$. For a possible extension of the range of admissible values for the parameter $2\alpha$ to $\N \cup (\lfloor (d - 1)/2 \rfloor, \infty)$, see \citet{arXiv:1606.04747}.

\citet{MR3278931} showed that, for any reals $\beta_1, \ldots, \beta_d \in (-\alpha, 0]$ and integer $d_1\in \{1, \ldots, d-1 \}$, one has
\begin{equation}
\label{eq:3}
\EE \left(\prod_{i=1}^d X_i^{\beta_i} \right) \geq \EE \left(\prod_{i=1}^{d_1} X_i^{\beta_i}\right) \EE \left(\prod_{i=d_1+1}^d X_i^{\beta_i}\right).
\end{equation}
This result was recovered by \citet{arXiv:2204.06220} using the multivariate gamma extension of the Gaussian correlation inequality, due to \citet{MR3289621}. These authors pointed out that the component-wise absolute negative powers of multivariate gamma random vectors are strongly positive upper orthant dependent and then integrated on both sides of the corresponding inequality.

This paper extends the validity of inequalities \eqref{eq:1} and \eqref{eq:3} in various ways. The joint distribution of traces of Wishart matrices is first defined in Section~\ref{sec:2}. It is then shown in Section~\ref{sec:3} that inequality \eqref{eq:3} is valid in arbitrary dimension $d \in \N$ for this class of distributions and for more general functions than powers. A similar result is proved in Section~\ref{sec:4} for inequality~\eqref{eq:1} in dimension $d = 2$. Finally, Section~\ref{sec:5} describes the broader context within which this research fits.

\section{The joint distribution of traces of Wishart matrices\label{sec:2}}

Let $2\alpha \in \N \cup (p - 1, \infty)$ and let $\Sigma$ be an SPSD matrix of size $p\times p$ for some integer $p \in \N = \{1, 2, \ldots \}$. A name is proposed below for  the joint distribution of a random vector of dimension $d \ge 2$ whose components are the traces of diagonal blocks of a $\mathrm{Wishart}_p(2\alpha,\Sigma/2)$ matrix. This distribution was previously studied by \citet{MR263201} in the context of the Gaussian correlation inequality conjecture, later proved in full generality by \citet{MR3289621}.

\begin{definition}[Multivariate trace-Wishart distribution]
\label{def:1}
A random vector $\bb{X}$ of dimension $d \ge 2$ is said to follow a multivariate trace-Wishart distribution with parameters $2\alpha \in \N \cup (p - 1, \infty)$, $p_1, \ldots, p_d \in \N$, and SPSD matrix $\Sigma$ of size $p \times p$ with $p = p_1 + \dots + p_d$, hereafter denoted $\bb{X}\sim \mathrm{TW}_{p_1,\dots,p_d}(\alpha,\Sigma)$, if and only if, for all vectors $\bb{t} \in [0,\infty)^d$,
\begin{equation}
\label{eq:4}
\EE \big( e ^{-\bb{t}^{\top} \bb{X}} \big) = | \mathrm{I}_d + \mathrm{diag}(t_1 \mathrm{I}_{p_1}, \dots, t_d \mathrm{I}_{p_d}) \, \Sigma |^{-\alpha},
\end{equation}
where $\Sigma$ is partitioned in blocks $\Sigma_{ij}$ of size $p_i \times p_j$ for every integers $i, j \in \{ 1, \ldots, d \}$. The expression for the Laplace transform of this distribution can be deduced from Eq.~$(3.2)$~and~Eq.~$(3.4)$ of \citet{MR263201}.
\end{definition}

Observe that if $W \sim \mathrm{Wishart}_p (2\alpha, \Sigma/2)$ has diagonal blocks $W_{11}, \ldots, W_{dd}$ of size $p_1 \times p_1, \ldots, p_d \times p_d$, respectively, then $\bb{X}$ has the same distribution as the random vector $(\mathrm{tr}(W_{11}), \ldots, \mathrm{tr}(W_{dd}))$, where $\mathrm{tr} (\cdot)$ denotes the trace operator. In the special case $p_1 = \dots = p_d = 1$, the multivariate trace-Wishart distribution is simply the multivariate gamma distribution defined in \eqref{eq:2}.

Given the expression of the Laplace transform of the trace-Wishart distribution in \eqref{eq:4}, one can see that all the marginal distributions are embedded, i.e., if $\mathcal{J} $ is a nonempty subset of $ \{1, \ldots, d\}$, then
\begin{equation}
\label{eq:5}
\bb{X}_{\mathcal{J}} = (X_j)_{j\in \mathcal{J}}\sim \mathrm{TW}_{(p_j)_{j\in \mathcal{J}}}(\alpha,\Sigma_{\mathcal{J}}),
\end{equation}
where $\Sigma_{\mathcal{J}}$ is constructed from $\Sigma$ by keeping only the rows and columns indexed by the numbers in $\mathcal{J}$.

Numerical experiments suggest that some weakened form of inequality~\eqref{eq:1} should hold for $\bb{X}\sim \mathrm{TW}_{p_1,\dots,p_d}(\alpha,\Sigma)$, or equivalently for the traces of diagonal blocks of $\mathrm{Wishart}_p (2\alpha, \Sigma/2)$ matrices with $p = p_1 + \dots + p_d$, if $\Sigma$ is symmetric positive definite (SPD). In particular, this conjecture holds true when the entries of $\Sigma$ are nonnegative (even if $\Sigma$ is singular), by a straightforward adaptation of the argument presented in the proof of Theorem~2.1 of \citet{arXiv:2204.06220}.

\begin{conjecture}
\label{con:1}
Let $W$ be a random matrix distributed as $\mathrm{Wishart}_p (2\alpha, \Sigma/2)$ for some $2\alpha \in \N \cup (p - 1, \infty)$ and SPD matrix $\Sigma$ of size $p \times p$ for some integer $p \in \N$. Let $p_1, \ldots, p_d \in \N$ be integers such that $p_1 + \cdots + p_d = p$ and assume that the matrix $\Sigma$ is partitioned in blocks $\Sigma_{ij}$ of size $p_i \times p_j$ for every integers $i, j \in \{ 1, \ldots, d \}$. For each integer $i \in \{ 1, \ldots, d\}$, let $W_{ii}$ denote the $i$th diagonal block of size $p_i \times p_i$ within $W$. It is conjectured that for every integer $d_1 \in \{1, \ldots, d-1 \}$ and all reals $\alpha_1, \ldots, \alpha_d \in [0, \infty)$, one has
\[
\EE \left[\prod_{i=1}^d \left\{\mathrm{tr}(W_{ii})\right\}^{\alpha_i}\right] \geq \EE \left[\prod_{i=1}^{d_1} \left\{\mathrm{tr}(W_{ii})\right\}^{\alpha_i}\right] \EE \left[\prod_{i=d_1+1}^d \left\{\mathrm{tr}(W_{ii})\right\}^{\alpha_i}\right].
\]
\end{conjecture}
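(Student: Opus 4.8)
The plan is to first pass from the Wishart matrix to its vector of block traces: by Definition~\ref{def:1} the vector $\bb{X} = (\mathrm{tr}(W_{11}), \dots, \mathrm{tr}(W_{dd}))$ follows $\mathrm{TW}_{p_1,\dots,p_d}(\alpha,\Sigma)$, so the claim is exactly that $\EE(\prod_{i=1}^d X_i^{\alpha_i}) \geq \EE(\prod_{i=1}^{d_1} X_i^{\alpha_i})\,\EE(\prod_{i=d_1+1}^d X_i^{\alpha_i})$. Writing $A = \{1,\dots,d_1\}$ and $B = \{d_1+1,\dots,d\}$, this asserts that the two coordinatewise increasing statistics $\prod_{i\in A} X_i^{\alpha_i}$ and $\prod_{i\in B} X_i^{\alpha_i}$ are nonnegatively correlated. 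The cleanest sufficient condition is therefore that $\bb{X}$ be positively associated, since association delivers $\EE(fg)\geq\EE(f)\EE(g)$ for all increasing $f,g$ at once, hence the inequality for every choice of $\alpha_1,\dots,\alpha_d\in[0,\infty)$ simultaneously, and indeed for increasing factors more general than powers.

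To establish association in a tractable regime, I would exploit the Gaussian representation $W = Y^{\top}Y$ (for integer $2\alpha$, the general parameter being reached by the analytic continuation underlying \eqref{eq:4}), where the rows of $Y$ are independent copies of $\bb{Y}\sim N(\bb{0},\Sigma/2)$. Grouping coordinates by blocks gives $\bb{X} = \sum_{l} \bb{R}_l$, a sum of independent copies of $\bb{R} = (\|\bb{Y}_{B_1}\|^2,\dots,\|\bb{Y}_{B_d}\|^2)$, where $\bb{Y}_{B_i}$ collects the entries of $\bb{Y}$ in block $i$. Each component of $\bb{R}$ is an increasing function of the disjoint family $(|Y_j|)_{j\in B_i}$; consequently, if $(|Y_1|,\dots,|Y_p|)$ is $\mathrm{MTP}_2$ then $\bb{R}$ is associated, independent sums preserve association, and so $\bb{X}$ is associated. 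Since $(|Y_1|,\dots,|Y_p|)$ is $\mathrm{MTP}_2$ whenever $\Sigma$ (or $S\Sigma S$ for some signature matrix $S$) has nonnegative entries, this route reproves the conjecture in the nonnegative-entry case while upgrading it to arbitrary increasing factors.

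For a general SPD matrix $\Sigma$ and exponents $\alpha_i\in(0,1]$, I would instead use the Bernstein representation $x^{\alpha_i} = \frac{\alpha_i}{\Gamma(1-\alpha_i)}\int_0^\infty (1 - e^{-u x})\,u^{-1-\alpha_i}\,\rd u$, valid because $x\mapsto x^{\alpha_i}$ is a Bernstein function. Substituting, expanding the product, and invoking Fubini reduces the target inequality, because the weight $\prod_i u_i^{-1-\alpha_i}$ is nonnegative, to the pointwise comparison
\[
\EE\Big[\textstyle\prod_{i=1}^d(1-e^{-u_i X_i})\Big] \geq \EE\Big[\textstyle\prod_{i\in A}(1-e^{-u_i X_i})\Big]\,\EE\Big[\textstyle\prod_{i\in B}(1-e^{-u_i X_i})\Big], \qquad \bb{u}\in[0,\infty)^d.
\]
When $d = 2$ this collapses to $\mathrm{Cov}(e^{-u_1 X_1}, e^{-u_2 X_2})\geq 0$, i.e.\ to $\EE(e^{-\bb{u}^\top\bb{X}})\geq \EE(e^{-u_1 X_1})\,\EE(e^{-u_2 X_2})$, which by \eqref{eq:4} and the identity $\det(\mathrm{I}_p + D\Sigma)=\det(\mathrm{I}_p + D^{1/2}\Sigma D^{1/2})$ with $D=\mathrm{diag}(u_1\mathrm{I}_{p_1},u_2\mathrm{I}_{p_2})$ is exactly Fischer's determinantal inequality applied to the SPSD matrix $\mathrm{I}_p + D^{1/2}\Sigma D^{1/2}$; this settles $d = 2$ for every SPSD $\Sigma$ and yields the Bernstein-function generalization of Section~\ref{sec:4}.

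The hard part, which I expect to be a genuine obstruction rather than a technical gap, is the SPD case with $d\geq 3$. The same symmetrization and Fischer's inequality show that the single positive-dependence inequality $\EE(e^{-\bb{u}^\top\bb{X}})\geq\EE(e^{-\bb{u}_A^\top\bb{X}_A})\,\EE(e^{-\bb{u}_B^\top\bb{X}_B})$ holds for all SPSD $\Sigma$ and all $d$ — which is precisely why the negative-power result \eqref{eq:3} extends to the trace-Wishart family in Section~\ref{sec:3} — but the expansion of $\prod_i(1-e^{-u_iX_i})$ produces $2^d$ terms with alternating signs, so this single comparison no longer controls the required mixed difference once $d\geq 3$. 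The association route also fails there, because $(|Y_1|,\dots,|Y_p|)$ need not be $\mathrm{MTP}_2$ when $\Sigma$ carries a genuinely indefinite sign pattern that no signature matrix can repair. Decisively, taking $2\alpha = 1$ and $p_1=\cdots=p_d=1$ reduces the statement to $\EE(\prod_i|Y_i|^{2\alpha_i})\geq\EE(\prod_{i\in A}|Y_i|^{2\alpha_i})\,\EE(\prod_{i\in B}|Y_i|^{2\alpha_i})$, i.e.\ the Gaussian product inequality \eqref{eq:1} itself, open for $d\geq 3$; hence any complete proof must contain one of the GPI. What keeps the conjecture plausible is that the cross-covariances $\mathrm{Cov}(X_i,X_j)=\alpha\,\|\Sigma_{ij}\|_F^2\geq 0$ are nonnegative irrespective of the signs in $\Sigma$, so the squaring inherent in the trace functional removes the sign cancellations that obstruct positive dependence; the remaining task would be to convert this uniform nonnegativity of pairwise dependence into the two-block inequality above, presumably through a complete-monotonicity or supermodularity property of the Laplace exponent $\alpha\log|\mathrm{I}_p + \mathrm{diag}(t_1\mathrm{I}_{p_1},\dots,t_d\mathrm{I}_{p_d})\,\Sigma|$ appearing in \eqref{eq:4}.
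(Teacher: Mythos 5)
First, a point of calibration: the statement you were asked to prove is Conjecture~\ref{con:1} of the paper, and the paper itself does \emph{not} prove it. The authors offer only numerical evidence, a remark that the case where $\Sigma$ has nonnegative entries follows by adapting an Isserlis--Wick moment expansion of Edelmann et al., and the partial results of Sections~\ref{sec:3}--\ref{sec:4}. You were therefore right not to claim a complete proof, and your reduction showing that the case $2\alpha = 1$, $p_1 = \cdots = p_d = 1$ is literally the open GPI~\eqref{eq:1} identifies exactly the correct obstruction: no argument of the kind sketched can close the general SPD case for $d \ge 3$. Moreover, your partial program tracks the paper's actual theorems closely. Your Bernstein-representation-plus-Fischer argument for $d = 2$ is, in substance, the paper's Lemma~\ref{lemma:1}, Theorem~\ref{thm:2} and Corollary~\ref{cor:2} (Laplace transform order via Fischer's inequality applied to $\mathrm{I}_p + D^{1/2}\Sigma D^{1/2}$, then the L\'evy--Khintchine representation of $t \mapsto t^q$), and your observation that the single comparison $\EE ( e^{-\bb{u}^\top \bb{X}} ) \ge \EE ( e^{-\bb{u}_A^\top \bb{X}_A} ) \, \EE ( e^{-\bb{u}_B^\top \bb{X}_B} )$ holds for every $d$ and every SPSD $\Sigma$ is the paper's Lemma~\ref{lemma:1}, the engine behind Theorem~\ref{thm:1}. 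One caveat: this ``settles $d=2$'' only for exponents $q_1, q_2 \in [0,1]$, which is all the paper achieves too; the conjecture for $d = 2$ with exponents larger than $1$ and general block sizes remains open, so you should not phrase the bivariate case as fully resolved.

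There is, however, one concrete error in the portion you do assert as proved. You claim that $(|Y_1|, \dots, |Y_p|)$ is $\mathrm{MTP}_2$ whenever $\Sigma$ (or $S \Sigma S$ for some signature matrix $S$) has nonnegative entries. This is false. By the Karlin--Rinott characterization (the very reference the paper cites for $\mathrm{MTP}_2$), the absolute values of a centered Gaussian vector are $\mathrm{MTP}_2$ if and only if there exists a signature matrix $S$ such that $-S\Sigma^{-1}S$ has nonnegative off-diagonal entries; this is a sign condition on $\Sigma^{-1}$, not on $\Sigma$, and entrywise nonnegativity of $\Sigma$ neither implies nor is implied by it. Concretely, take $p = 3$, $\Sigma$ with unit diagonal, $\Sigma_{12} = \Sigma_{23} = 0.6$, $\Sigma_{13} = 0$: this matrix is SPD with nonnegative entries, yet $(\Sigma^{-1})_{12} < 0$, $(\Sigma^{-1})_{23} < 0$ and $(\Sigma^{-1})_{13} > 0$, a pattern no signature matrix can repair, so $|\bb{Y}|$ is not $\mathrm{MTP}_2$. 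Consequently your association route does not establish the nonnegative-entry case for real exponents; what is actually known there --- and what the paper invokes --- is the combinatorial Isserlis--Wick expansion of Russell--Sun and Edelmann et al., which only handles nonnegative \emph{integer} exponents (consistent with the Kronecker-product reformulation the paper states right after the conjecture). Whether association of the block-trace vector, hence the conjecture for all real exponents, holds under entrywise nonnegativity of $\Sigma$ is itself unresolved. A smaller gap in the same part: the representation $W = Y^\top Y$ exists only for integer $2\alpha$; for non-integer $2\alpha \in (p-1,\infty)$ the law is defined only through~\eqref{eq:4}, so ``analytic continuation'' must be an argument about the inequality itself (e.g., continuity of both sides in $\alpha$), not about the representation. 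Your closing observation that $\mathrm{Cov}(X_i, X_j) = \alpha \| \Sigma_{ij} \|_F^2 \ge 0$ regardless of the signs in $\Sigma$ is correct and is indeed the right heuristic for why the conjecture is plausible beyond the Gaussian case.
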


If the exponents are restricted to be nonnegative integers in the above expression, Conjecture~\ref{con:1} reduces to the claim that, for all integers $n_1, \ldots, n_d\in \N_0$ and any integer $d_1\in \{1, \ldots, d-1 \}$, one has
\[
\mathrm{tr}\left[ \EE \left\{\otimes_{i=1}^d \left(W_{ii}\right)^{\otimes n_i}\right\} - \EE \left\{ \otimes_{i=1}^{d_1} \left(W_{ii}\right)^{\otimes n_i}\right\} \otimes \EE \left\{\otimes_{i=d_1+1}^d \left(W_{ii}\right)^{\otimes n_i}\right\}\right] \geq 0,
\]
where $\otimes$ denotes the Kronecker product, and $A^{\otimes n}$ means $A \otimes \dots \otimes A$, where $A$ appears $n$ times. To see this, it suffices to use the linearity of the trace operator and the identity $\mathrm{tr}(A) \mathrm{tr}(B) = \mathrm{tr}(A \otimes B)$.

\section{Extension of a result of \texorpdfstring{\citet{MR3278931}}{Wei (2014)} concerning inequality~(3)\label{sec:3}}

As mentioned in the introduction, \citet{MR3278931} established inequality~\eqref{eq:3} for a random vector $\bb{X} = (X_1, \ldots, X_d)$ with Laplace transform \eqref{eq:2}, arbitrary integer $d_1 \in \{1, \ldots, d-1 \}$, and any reals $\beta_1, \ldots, \beta_d \in (-\alpha, 0]$; see the lower bound in Theorem~3.2 of~\citet{MR3278931} and Section~4 of~\citet{arXiv:2204.06220}.

The following result extends Wei's finding by showing that inequality \eqref{eq:3} holds for the more general multivariate trace-Wishart distribution and that the negative powers in \eqref{eq:3} can be replaced by arbitrary completely monotone functions. Recall from Definition~1.3 of \citet{MR2978140} that a function $\phi: (0, \infty)\to \R$ is completely monotone if $\phi$ is nonnegative, infinitely differentiable on $(0, \infty)$, and satisfies $(-1)^n \phi^{(n)}(t) \geq 0$ for every integer $n\in \N$ and every real $t \in (0, \infty)$.

\begin{theorem}\label{thm:1}
Let $\bb{X}$ be a random vector of dimension $d \ge 2$ distributed as $\mathrm{TW}_{p_1,\dots,p_d}(\alpha, \Sigma)$ for some $2\alpha \in \N \cup (p - 1, \infty)$, integers $p_1, \ldots, p_d \in \N$, and SPSD matrix $\Sigma$ of size $p \times p$ with $p = p_1 + \dots + p_d$. Then for any collection $\phi_1,\dots,\phi_d$ of completely monotone functions on $(0, \infty)$ and any integer $d_1\in \{1,\dots,d-1\}$, one has
\[
\EE\left\{\prod_{i=1}^d \phi_i(X_i)\right\} \geq \EE\left\{\prod_{i=1}^{d_1} \phi_i(X_i)\right\} \EE\left\{\prod_{i=d_1+1}^{d} \phi_i(X_i)\right\},
\]
provided that the expectations exist.
\end{theorem}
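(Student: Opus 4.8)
The plan is to combine Bernstein's representation of completely monotone functions with the explicit form of the trace-Wishart Laplace transform in~\eqref{eq:4}, thereby reducing the whole statement to a single pointwise determinant inequality of Fischer type. By Bernstein's theorem (see, e.g., Theorem~1.4 of \citet{MR2978140}), each $\phi_i$ admits the representation $\phi_i(x) = \int_{[0,\infty)} e^{-s x}\,\rd\mu_i(s)$ for some nonnegative Borel measure $\mu_i$. Writing $\bb{s} = (s_1,\dots,s_d)$, $D(\bb{s}) = \mathrm{diag}(s_1 \mathrm{I}_{p_1},\dots,s_d \mathrm{I}_{p_d})$, and letting $\rd\bb{\mu}$ denote the product measure $\rd\mu_1\cdots\rd\mu_d$, I would invoke Tonelli's theorem (legitimate since all integrands are nonnegative) to interchange expectation and integration, obtaining
\[
\EE\left\{\prod_{i=1}^d \phi_i(X_i)\right\} = \int_{[0,\infty)^d} \EE\big(e^{-\bb{s}^{\top}\bb{X}}\big)\,\rd\bb{\mu}(\bb{s}) = \int_{[0,\infty)^d} |\mathrm{I}_p + D(\bb{s})\,\Sigma|^{-\alpha}\,\rd\bb{\mu}(\bb{s}).
\]
Applying the marginal embedding property~\eqref{eq:5} to the two index blocks $\mathcal{J}_1 = \{1,\dots,d_1\}$ and $\mathcal{J}_2 = \{d_1+1,\dots,d\}$, and using Tonelli once more, the product of the two factors on the right-hand side of the theorem can itself be written as a single integral over $[0,\infty)^d$ against the same product measure $\rd\bb{\mu}$.

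Consequently, it suffices to prove the pointwise inequality, for every $\bb{s} \in [0,\infty)^d$,
\[
|\mathrm{I}_p + D(\bb{s})\,\Sigma|^{-\alpha} \geq |\mathrm{I}_{p^{(1)}} + D^{(1)}\Sigma^{(1)}|^{-\alpha}\,|\mathrm{I}_{p^{(2)}} + D^{(2)}\Sigma^{(2)}|^{-\alpha},
\]
where $p^{(1)} = p_1 + \dots + p_{d_1}$, $p^{(2)} = p - p^{(1)}$, the matrices $D^{(1)}, D^{(2)}$ are the corresponding diagonal blocks of $D(\bb{s})$, and $\Sigma^{(1)} = \Sigma_{\mathcal{J}_1}, \Sigma^{(2)} = \Sigma_{\mathcal{J}_2}$ are the two coarse diagonal blocks of $\Sigma$ from~\eqref{eq:5}. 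Since $\alpha > 0$ and $x \mapsto x^{-\alpha}$ is decreasing on $(0,\infty)$, this is equivalent to the determinant inequality $|\mathrm{I}_p + D(\bb{s})\,\Sigma| \leq |\mathrm{I}_{p^{(1)}} + D^{(1)}\Sigma^{(1)}|\,|\mathrm{I}_{p^{(2)}} + D^{(2)}\Sigma^{(2)}|$.

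To establish this, I would use the identity $|\mathrm{I}_p + BC| = |\mathrm{I}_p + CB|$ together with the factorization $D(\bb{s}) = D(\bb{s})^{1/2}D(\bb{s})^{1/2}$ (valid since $D(\bb{s})$ is diagonal with nonnegative entries) to write $|\mathrm{I}_p + D(\bb{s})\,\Sigma| = |\mathrm{I}_p + M|$, where $M = D(\bb{s})^{1/2}\,\Sigma\,D(\bb{s})^{1/2}$ is SPSD. Because $D(\bb{s})^{1/2}$ is diagonal and respects the block pattern, the diagonal blocks of $M$ are exactly $M^{(1)} = (D^{(1)})^{1/2}\Sigma^{(1)}(D^{(1)})^{1/2}$ and $M^{(2)} = (D^{(2)})^{1/2}\Sigma^{(2)}(D^{(2)})^{1/2}$, so the two factors on the right are $|\mathrm{I}_{p^{(1)}} + M^{(1)}|$ and $|\mathrm{I}_{p^{(2)}} + M^{(2)}|$. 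The matrix $N = \mathrm{I}_p + M$ is SPD with diagonal blocks $\mathrm{I}_{p^{(1)}} + M^{(1)}$ and $\mathrm{I}_{p^{(2)}} + M^{(2)}$, so Fischer's inequality for the determinant of a block-partitioned positive definite matrix gives $|N| \leq |\mathrm{I}_{p^{(1)}} + M^{(1)}|\,|\mathrm{I}_{p^{(2)}} + M^{(2)}|$, which is precisely the desired bound.

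The main obstacle is recognizing that the entire problem collapses to Fischer's inequality once the symmetrization $|\mathrm{I}_p + D(\bb{s})\,\Sigma| = |\mathrm{I}_p + D(\bb{s})^{1/2}\Sigma D(\bb{s})^{1/2}|$ exposes the correct SPSD matrix $M$ with the right block structure; after that, the determinant inequality is classical. The only further care needed is justifying the interchange of expectation and integration and confirming that the assumed existence of the expectations keeps all integrals finite, both of which follow from nonnegativity via Tonelli's theorem.
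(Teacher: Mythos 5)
Your proof is correct, and it differs from the paper's in a way worth noting: it is self-contained where the paper leans on the stochastic-orders literature. The paper introduces the auxiliary vector $\bb{X}^{\star} \sim \mathrm{TW}_{p_1,\dots,p_d}(\alpha,\mathrm{diag}(\Sigma_{11},\Sigma_{22}))$, proves $\bb{X} \preceq_{\mathrm{Lt}} \bb{X}^{\star}$ (Lemma~\ref{lemma:1}, via Fischer's inequality), then cites Theorem~7.D.6 of \citet{MR2265633} as a black box to convert the Laplace-transform ordering into an ordering of expectations of products of completely monotone functions, and finishes with the marginal embedding~\eqref{eq:5}. Your Bernstein-plus-Tonelli reduction is precisely the content of that cited theorem, and your pointwise determinant bound is precisely Lemma~\ref{lemma:1} with the power $-\alpha$ stripped off; so the crux---Fischer's inequality---is the same, but you have inlined the two black boxes. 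Your route buys two things: first, no appeal to the stochastic-order machinery and no auxiliary random vector at all, since the independence of the starred blocks is replaced by the factorization of the deterministic integrand; second, a repair of the one terse spot in the paper's argument, namely that the matrix $\mathrm{I}_p + D(\bb{s})\,\Sigma$ to which Lemma~\ref{lemma:1} applies Fischer's inequality is not symmetric in general, whereas Theorem~7.8.5 of \citet{MR2978290} is stated for positive semidefinite matrices; your symmetrization $|\mathrm{I}_p + D(\bb{s})\,\Sigma| = |\mathrm{I}_p + D(\bb{s})^{1/2}\,\Sigma\, D(\bb{s})^{1/2}|$ via $|\mathrm{I}+BC| = |\mathrm{I}+CB|$ supplies exactly the justification the paper leaves implicit. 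What the paper's route buys in exchange is brevity and a reusable intermediate statement: the Lt-ordering of Lemma~\ref{lemma:1} is invoked again later (e.g., in the proof of Corollary~\ref{cor:2}), whereas your argument proves only the theorem at hand.
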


The proof of this result relies on the notion of multivariate Laplace transform order, whose definition is recalled below from p.~350 of the book by~\citet{MR2265633}; see also \citet{MR1857969}.

\begin{definition}[Multivariate Laplace transform order]
\label{def:2}
Let $\bb{X}$ and $\bb{Y}$ be two $d$-variate random vectors with nonnegative entries. Then $\bb{X}$ is said to be smaller than $\bb{Y}$ in the multivariate Laplace transform order, hereafter denoted by $\bb{X} \preceq_{\mathrm{Lt}} \bb{Y}$, if and only if, for all vectors $\bb{t} \in (0, \infty)^d$, one has $\EE \big ( e^{-\bb{t}^{\top} \bb{X}} \big) \geq \EE \big (e ^{-\bb{t}^{\top} \bb{Y}} \big )$.
\end{definition}

\begin{lemma}
\label{lemma:1}
Let $\bb{X}$ and $\bb{X}^{\star}$ be two random vectors of dimension $d \ge 2$ such that for some $2\alpha \in \N \cup (p - 1, \infty)$ and integers $p_1, \ldots, p_d \in \N$, one has
\[
\bb{X} \sim \mathrm{TW}_{p_1, \ldots, p_d} (\alpha,\Sigma), \quad \bb{X}^{\star} \sim \mathrm{TW}_{p_1, \ldots, p_d} (\alpha,\Sigma^{\star})
\]
for some SPSD matrices $\Sigma$ and $\Sigma^{\star}$ of size $p \times p$ with $p = p_1 + \cdots + p_d$. Suppose that for some integer $d_1 \in \{ 1, \ldots, d-1\}$,
\[
\Sigma =
\begin{pmatrix}\Sigma_{11} & \Sigma_{12} \\ \Sigma_{21} & \Sigma_{22}
\end{pmatrix}, \quad
\Sigma^{\star} = \mathrm{diag}(\Sigma_{11},\Sigma_{22}) =
\begin{pmatrix}
\Sigma_{11} & 0_{q_1 \times q_2} \\ 0_{q_2 \times q_1} & \Sigma_{22}
\end{pmatrix},
\]
where $0_{a \times b}$ refers to an $a \times b$ matrix of zeros, $\Sigma_{11}$ is a matrix of size $q_1 \times q_1$ with $q_1 = p_1 + \dots + p_{d_1}$, and $\Sigma_{22}$ is a matrix of size $q_2 \times q_2$ with $q_2 = p_{d_1+1} + \dots + p_d$. Then $\bb{X} \preceq_{\mathrm{Lt}} \bb{X}^{\star}$.
\end{lemma}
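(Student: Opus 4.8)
The plan is to work directly with the closed-form Laplace transforms supplied by Definition~\ref{def:1} and reduce the stochastic comparison $\bb{X}\preceq_{\mathrm{Lt}}\bb{X}^{\star}$ to a single determinantal inequality. Fix an arbitrary $\bb{t}\in(0,\infty)^d$ and set $T = \mathrm{diag}(t_1\mathrm{I}_{p_1},\dots,t_d\mathrm{I}_{p_d})$, a $p\times p$ diagonal matrix. By \eqref{eq:4},
\[
\EE\big(e^{-\bb{t}^{\top}\bb{X}}\big) = |\mathrm{I}_p + T\Sigma|^{-\alpha}, \qquad \EE\big(e^{-\bb{t}^{\top}\bb{X}^{\star}}\big) = |\mathrm{I}_p + T\Sigma^{\star}|^{-\alpha}.
\]
Since $2\alpha\ge 1$ forces $\alpha>0$, the map $x\mapsto x^{-\alpha}$ is strictly decreasing on $(0,\infty)$, so once both determinants are seen to be positive, the target inequality $\EE(e^{-\bb{t}^{\top}\bb{X}})\ge\EE(e^{-\bb{t}^{\top}\bb{X}^{\star}})$ becomes equivalent to $|\mathrm{I}_p + T\Sigma| \le |\mathrm{I}_p + T\Sigma^{\star}|$.

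Next I would symmetrize. Because every $t_i>0$, the matrix $T$ is SPD, so $T^{1/2}$ is well defined and invertible; hence $T\Sigma$ is similar to the symmetric matrix $A := T^{1/2}\Sigma\,T^{1/2}$ and $T\Sigma^{\star}$ is similar to $A^{\star} := T^{1/2}\Sigma^{\star} T^{1/2}$, giving $|\mathrm{I}_p+T\Sigma| = |\mathrm{I}_p+A|$ and $|\mathrm{I}_p+T\Sigma^{\star}|=|\mathrm{I}_p+A^{\star}|$. As $\Sigma$ and $\Sigma^{\star}$ are SPSD, congruence keeps $A$ and $A^{\star}$ SPSD, so $\mathrm{I}_p+A$ and $\mathrm{I}_p+A^{\star}$ are SPD with determinants at least $1>0$, which supplies the positivity claimed above. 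Writing $T=\mathrm{diag}(T_1,T_2)$ conformally with the coarse $2\times 2$ partition of $\Sigma$, where $T_1$ has size $q_1\times q_1$ and $T_2$ has size $q_2\times q_2$, a block computation yields
\[
A = \begin{pmatrix} T_1^{1/2}\Sigma_{11}T_1^{1/2} & T_1^{1/2}\Sigma_{12}T_2^{1/2}\\ T_2^{1/2}\Sigma_{21}T_1^{1/2} & T_2^{1/2}\Sigma_{22}T_2^{1/2}\end{pmatrix} =: \begin{pmatrix}A_{11} & A_{12}\\ A_{21} & A_{22}\end{pmatrix}, \qquad A^{\star} = \begin{pmatrix}A_{11} & 0\\ 0 & A_{22}\end{pmatrix},
\]
so that the off-diagonal coupling blocks of $\Sigma$ are exactly what is switched off in passing from $A$ to $A^{\star}$.

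The decisive step is to invoke Fischer's inequality for SPD block matrices: for any SPD matrix of the form $\bigl(\begin{smallmatrix}B & C\\ C^{\top} & D\end{smallmatrix}\bigr)$ one has $\det\bigl(\begin{smallmatrix}B&C\\C^{\top}&D\end{smallmatrix}\bigr)\le \det(B)\det(D)$. Applying this to $\mathrm{I}_p+A$ with $B=\mathrm{I}_{q_1}+A_{11}$, $D=\mathrm{I}_{q_2}+A_{22}$, and $C=A_{12}$ (so that $C^{\top}=A_{21}$ by symmetry of $A$) gives
\[
|\mathrm{I}_p+A| \le |\mathrm{I}_{q_1}+A_{11}|\,|\mathrm{I}_{q_2}+A_{22}| = |\mathrm{I}_p+A^{\star}|,
\]
which is precisely the determinantal inequality sought. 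I do not anticipate a genuine obstacle: the only points needing care are confirming that $\mathrm{I}_p+A$ is SPD, so that Fischer's inequality applies and the determinants are positive, and correctly tracking the conformal block structure of $T$. Should one wish to avoid citing Fischer's inequality, the same bound follows from the Schur-complement factorization $|\mathrm{I}_p+A| = |\mathrm{I}_{q_1}+A_{11}|\cdot|(\mathrm{I}_{q_2}+A_{22}) - A_{21}(\mathrm{I}_{q_1}+A_{11})^{-1}A_{12}|$, since subtracting the SPSD matrix $A_{21}(\mathrm{I}_{q_1}+A_{11})^{-1}A_{12}$ can only decrease the determinant of the SPD matrix $\mathrm{I}_{q_2}+A_{22}$.
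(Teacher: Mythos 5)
Your proposal is correct and takes essentially the same route as the paper: reduce $\bb{X} \preceq_{\mathrm{Lt}} \bb{X}^{\star}$ to the determinantal inequality $|\mathrm{I}_p + T\Sigma| \le |\mathrm{I}_p + T\Sigma^{\star}|$ and conclude by Fischer's inequality. If anything, your explicit symmetrization $A = T^{1/2}\Sigma\,T^{1/2}$ (together with the Schur-complement alternative) justifies a step the paper leaves implicit, since Fischer's inequality is stated for symmetric positive (semi)definite matrices while $\mathrm{I}_p + T\Sigma$ itself need not be symmetric.
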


\begin{proof}[\bf Proof of Lemma~\ref{lemma:1}]
By Definition~\ref{def:1}, one has
\[
\bb{X} \preceq_{\mathrm{Lt}} \bb{X}^{\star} \quad \Longleftrightarrow \quad \forall_{t_1, \ldots, t_d \in (0, \infty)} ~~ |\mathrm{I}_d + \mathrm{diag}(t_1 \mathrm{I}_{p_1}, \dots, t_d \mathrm{I}_{p_d}) \, \Sigma |^{-\alpha} \geq  |\mathrm{I}_d + \mathrm{diag}(t_1 \mathrm{I}_{p_1}, \dots, t_d \mathrm{I}_{p_d}) \, \Sigma^{\star} |^{-\alpha} .
\]
Equivalently, the right-hand inequality can be written as
\[
|\mathrm{I}_d + \mathrm{diag}(t_1 \mathrm{I}_{p_1}, \dots, t_d \mathrm{I}_{p_d}) \, \Sigma | \leq | \mathrm{I}_d + \mathrm{diag}(t_1 \mathrm{I}_{p_1}, \dots, t_d \mathrm{I}_{p_d}) \, \Sigma^{\star} |,
 \]
which, taking into account the block diagonal structure of $\Sigma^{\star}$, is equivalent to
\[
|\mathrm{I}_d + \mathrm{diag}(t_1 \mathrm{I}_{p_1}, \dots, t_d \mathrm{I}_{p_d}) \, \Sigma | \leq | \mathrm{I}_d + \mathrm{diag}(t_1 \mathrm{I}_{p_1}, \dots, t_{d_1} \mathrm{I}_{p_{d_1}}) \, \Sigma_{11}  |  | \mathrm{I}_d + \mathrm{diag}(t_{d_1+1} \mathrm{I}_{p_{d_1+1}}, \dots, t_d \mathrm{I}_{p_d}) \, \Sigma_{22}  |.
\]
The latter fact is valid by Fischer's inequality; see, e.g., Theorem~7.8.5 in \citet{MR2978290}.
\end{proof}

\begin{proof}[\bf Proof of Theorem~\ref{thm:1}]
Let $\bb{X} \sim \mathrm{TW}_{p_1, \ldots, p_d}(\alpha,\Sigma)$ for some $2\alpha \in \N \cup (p - 1, \infty)$, integers $p_1, \ldots, p_d \in \N$, and SPSD matrix $\Sigma$ of size $p\times p$ with $p = p_1 + \dots + p_d$. Fix an integer $d_1 \in \{1, \ldots, d-1 \}$ and let $q_1 = p_1 + \dots + p_{d_1}$, $q_2 = p_{d_1+1} + \dots + p_d$ so that $q_1 + q_2 = p$. Now write
\[
\Sigma = \begin{pmatrix}
\Sigma_{11} & \Sigma_{12} \\ \Sigma_{21} & \Sigma_{22}
\end{pmatrix}, \quad \Sigma^{\star} = \mathrm{diag}(\Sigma_{11},\Sigma_{22}) =
\begin{pmatrix}
\Sigma_{11} & 0_{q_1 \times q_2} \\ 0_{q_2 \times q_1} & \Sigma_{22}
\end{pmatrix},
\]
where $\Sigma_{11}$ and $\Sigma_{22}$ are $q_1 \times q_1$ and $q_2 \times q_2$, respectively. Next consider a $d$-variate random vector $\bb{X}^{\star}$ distributed as $\mathrm{TW}_{p_1, \ldots, p_d} (\alpha, \Sigma^{\star})$. In view of Lemma~\ref{lemma:1}, one has $\bb{X} \preceq_{\mathrm{Lt}} \bb{X}^{\star}$  and hence Theorem~7.D.6 of~\citet{MR2265633} implies that, for any collection $\phi_1, \ldots, \phi_d$ of completely monotone functions on the interval $(0, \infty)$, one has
\[
\EE \left\{ \prod_{i=1}^d \phi_i(X_i)\right\} \geq \EE \left\{ \prod_{i=1}^{d_1} \phi_i(X_i^{\star})\right\} \EE \left\{ \prod_{i=d_1+1}^{d} \phi_i(X_i^{\star})\right\}.
\]
To conclude, note that in view of property~\eqref{eq:5}, the random vectors $(X_1,\dots,X_{d_1})$ and $(X_{d_1+1},\dots,X_d)$ have the same distribution as $(X_1^{\star}, \ldots, X_{d_1}^{\star})$ and $(X_{d_1+1}^{\star}, \ldots, X_d^{\star})$, respectively.
\end{proof}

The following is an interesting consequence of Theorem~\ref{thm:1}.

\begin{corollary}
\label{cor:1}
Let $\bb{X}$ be a random vector of dimension $d \ge 2$ distributed as $\mathrm{TW}_{p_1,\dots,p_d}(\alpha, \Sigma)$ for some $2\alpha \in \N \cup (p - 1, \infty)$, integers $p_1, \ldots, p_d \in \N$, and SPSD matrix $\Sigma$ of size $p \times p$ with $p = p_1 + \dots + p_d$. Then for any integer $d_1 \in \{1, \ldots, d-1\}$, the following statements hold true.
\begin{enumerate}[(a)]
\item For any reals $q_1, \ldots, q_d \in (-\alpha,0]$,
\begin{equation}
\label{eq:6}
\EE \left( \prod_{i=1}^d X_i^{q_i} \right) \geq \EE \left( \prod_{i=1}^{d_1} X_i^{q_i} \right) \EE \left( \prod_{i=d_1+1}^d X_i^{q_i} \right).
\end{equation}

\item
For any reals $r_1, \ldots, r_d \in [0,1]$,
\begin{equation}
\label{eq:7}
\EE \left( \prod_{i=1}^d e^{-X_i^{r_i}} \right) \geq \EE \left( \prod_{i=1}^{d_1} e^{-X_i^{r_i}} \right) \EE \left( \prod_{i=d_1+1}^d e^{-X_i^{r_i}} \right).
\end{equation}

\item For any integer $n\in \N$ and any sequence $A_1, \ldots, A_d$ of SPSD matrices of size $n \times n$,
\begin{equation}
\label{eq:8}
\mathrm{tr}\left[ \EE \left\{ \exp \left( -\oplus_{i=1}^d X_i A_i \right) \right\} - \EE \left\{ \exp\left(-\oplus_{i=1}^{d_1} X_i A_i \right) \right\} \otimes \EE \left\{ \exp \left(-\oplus_{i=d_1+1}^d X_i A_i \right) \right\} \right] \geq 0,
\end{equation}
where $\oplus$ denotes the Kronecker sum.
\end{enumerate}
\end{corollary}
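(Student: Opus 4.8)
The plan is to obtain all three statements as immediate consequences of Theorem~\ref{thm:1}, by identifying in each case a family $\phi_1, \ldots, \phi_d$ of completely monotone functions and verifying the complete monotonicity hypothesis.

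For part~(a), I would take $\phi_i(x) = x^{q_i}$ with $q_i \in (-\alpha, 0]$. When $q_i = 0$ this is the constant $1$, which is trivially completely monotone; when $q_i = -s_i$ with $s_i > 0$, differentiating gives $(-1)^n \phi_i^{(n)}(x) = s_i(s_i + 1)\cdots(s_i + n - 1)\, x^{-s_i - n} \ge 0$ for all $n \in \N$ and $x \in (0,\infty)$, so $\phi_i$ is completely monotone. The restriction $q_i > -\alpha$ guarantees the finiteness of the relevant moments (consistent with the range in \eqref{eq:3}), so the proviso of Theorem~\ref{thm:1} holds and \eqref{eq:6} follows at once.

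For part~(b), I would take $\phi_i(x) = e^{-x^{r_i}}$ with $r_i \in [0,1]$. The essential point is that $x \mapsto x^{r_i}$ is a Bernstein function for every $r_i \in [0,1]$, and the composition of a completely monotone function with a Bernstein function is again completely monotone (a standard fact; see \citet{MR2978140}). Since $x \mapsto e^{-x}$ is completely monotone, each $\phi_i$ is completely monotone, and Theorem~\ref{thm:1} yields \eqref{eq:7}.

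Part~(c) requires a short algebraic reduction before Theorem~\ref{thm:1} can be applied. Using the identity $\exp(\oplus_{i=1}^d B_i) = \otimes_{i=1}^d \exp(B_i)$ with $B_i = -X_i A_i$, one has $\exp(-\oplus_{i=1}^d X_i A_i) = \otimes_{i=1}^d \exp(-X_i A_i)$; taking traces and applying $\mathrm{tr}(A \otimes B) = \mathrm{tr}(A)\,\mathrm{tr}(B)$ repeatedly, together with the linearity of the trace and of the expectation, rewrites the left-hand side of \eqref{eq:8} as $\EE\{\prod_{i=1}^d \phi_i(X_i)\}$ and the subtracted term as $\EE\{\prod_{i=1}^{d_1} \phi_i(X_i)\}\, \EE\{\prod_{i=d_1+1}^d \phi_i(X_i)\}$, where $\phi_i(x) = \mathrm{tr}\{\exp(-x A_i)\}$. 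Diagonalizing each SPSD matrix $A_i$ with eigenvalues $\lambda_{i,1}, \ldots, \lambda_{i,n} \ge 0$ gives $\phi_i(x) = \sum_{k=1}^n e^{-\lambda_{i,k} x}$, a finite sum of completely monotone functions, hence completely monotone; a final application of Theorem~\ref{thm:1} establishes \eqref{eq:8}. I expect the only genuinely delicate step to be the complete monotonicity of $e^{-x^{r_i}}$ in part~(b), which rests on the composition theorem for Bernstein functions, while parts~(a) and~(c) reduce to Theorem~\ref{thm:1} by elementary differentiation and by the standard Kronecker sum/product identities, respectively.
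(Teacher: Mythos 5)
Your proposal is correct, and its overall architecture coincides with the paper's: each part is reduced to Theorem~\ref{thm:1} by exhibiting suitable completely monotone functions, with part~(a) argued identically (elementary differentiation of the power functions, the range $(-\alpha,0]$ serving only to ensure the expectations exist) and part~(c) using the same Kronecker-sum/product algebra, namely $\exp(-\oplus_i X_i A_i) = \otimes_i \exp(-X_i A_i)$, $\mathrm{tr}(A\otimes B) = \mathrm{tr}(A)\,\mathrm{tr}(B)$, and the interchange of trace and expectation. Where you genuinely diverge is in the two complete-monotonicity lemmas. For part~(b), you invoke the composition rule (a completely monotone function composed with a Bernstein function is completely monotone, as recorded in \citet{MR2978140}) applied to $e^{-x}$ and $x \mapsto x^{r_i}$, whereas the paper argues that $g_r(t) = \exp(-t^r)$ is \emph{logarithmically} completely monotone and then uses the fact that logarithmic complete monotonicity implies complete monotonicity \citep{MR2075188,MR2112748}; both routes are standard and equally short. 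For part~(c), the difference is more substantive: the paper justifies the complete monotonicity of $\phi_i(x) = \mathrm{tr}\{\exp(-x A_i)\}$ by citing Stahl's proof of the BMV conjecture \citep{MR3143891}, while you diagonalize the SPSD matrix $A_i$ to write $\phi_i(x) = \sum_{k=1}^n e^{-\lambda_{i,k} x}$, a finite sum of decaying exponentials. Your argument is strictly more elementary and fully sufficient: with a single matrix there is no noncommutativity, so the depth of the BMV theorem (which concerns two-matrix expressions of the form $\mathrm{tr}\{\exp(A - tB)\}$) is not actually needed for this claim, and your proof makes the corollary self-contained where the paper's leans on a far heavier result.
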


\begin{proof}[\bf Proof of Corollary~\ref{cor:1}]
Note that the map $t \mapsto t^q$ is completely monotone on the interval $(0, \infty)$ for every real $q \in (-\infty, 0]$. Therefore, claim \eqref{eq:6} is a consequence of Theorem~\ref{thm:1}. The restriction on the reals $q_1, \ldots, q_d$ in the statement of part~(a) is simply there to ensure that the expectations exist.

Next, fix a real $r \in [0,1]$ and consider the map $g_r: (0, \infty) \to (0, \infty)$ defined for all real $t \in (0, \infty)$ by $g_r(t) = \exp(-t^r)$. Observe that $g_r$ is logarithmically completely monotone, i.e., such that $-(\ln  g_r)^\prime$ is completely monotone on the interval $(0, \infty)$; see Definition~1 of~\citet{MR2075188} or~Definition~5.10 of~\citet{MR2978140}. It is well known that the property of being logarithmically completely monotone is stronger than being completely monotone; see, e.g., Theorem~1.1 of~\citet{MR2112748} or Theorem~1 of~\citet{MR2075188}. Therefore, $g_r$ is completely monotone on the interval $(0, \infty)$ for any real $r \in [0, 1]$. Accordingly, claim \eqref{eq:7} is a simple consequence of Theorem~\ref{thm:1}.

Turning to claim~\eqref{eq:8}, it is known from a result proven by \citet{MR3143891}, formerly known as the BMV conjecture, that for any SPSD matrix $A$ of size $n\times n$, the map $t \mapsto \mathrm{tr} \{ \exp(-t A) \}$ is completely monotone on $(0,\infty)$. Therefore, if $\otimes$ denotes the Kronecker product, one has, for arbitrary integer $d_1 \in \{ 1, \ldots, d-1 \}$,
\begin{align*}
\mathrm{tr}\left[\EE\left\{\exp\left(-\oplus_{i=1}^d X_i A_i\right)\right\}\right]
&= \EE\left[\mathrm{tr}\left\{\exp\left(-\oplus_{i=1}^d X_i A_i\right)\right\}\right] \\
&= \EE\left[\mathrm{tr}\left\{\otimes_{i=1}^d \exp\left(-X_i A_i\right)\right\}\right]
= \EE\left[\prod_{i=1}^d \mathrm{tr}\left\{\exp\left(-X_i A_i\right)\right\}\right]  \\
&\geq \EE\left[\prod_{i=1}^{d_1} \mathrm{tr}\left\{\exp\left(-X_i A_i\right)\right\}\right] \EE\left[\prod_{i=d_1+1}^d \mathrm{tr}\left\{\exp\left(-X_i A_i\right)\right\}\right] ,
\end{align*}
where the inequality is a direct application of Theorem~\ref{thm:1} with $\phi_i(x) = \mathrm{tr} \{\exp(- x A_i) \}$. The lower bound can then be expressed in the alternative forms
\vspace{-2mm}
\begin{align*}
\EE \left[ \mathrm{tr} \left\{\otimes_{i=1}^{d_1} \exp \left( -X_i A_i \right) \right\} \right] & \EE \left[ \mathrm{tr} \left\{ \otimes_{i=d_1+1}^d \exp \left( -X_i A_i \right) \right\} \right] \\[2mm]
& \hspace{-3mm} = \EE \left[ \mathrm{tr} \left\{ \exp \left(- \oplus_{i=1}^{d_1} X_i A_i \right) \right\} \right] \EE \left[ \mathrm{tr} \left\{ \exp \left( -\oplus_{i=d_1+1}^d X_i A_i \right) \right\} \right] \\[2mm]
& \hspace{-3mm} = \mathrm{tr} \left[\EE \left\{ \exp \left(- \oplus_{i=1}^{d_1} X_i A_i \right) \right\} \right] \mathrm{tr} \left[ \EE \left\{ \exp \left( -\oplus_{i=d_1+1}^d X_i A_i \right) \right\} \right] \\[2mm]
& \hspace{-3mm}  = \mathrm{tr} \left[ \EE \left\{ \exp \left(- \oplus_{i=1}^{d_1} X_i A_i \right) \right\} \otimes \EE \left\{ \exp \left( -\oplus_{i=d_1+1}^d X_i A_i \right) \right\} \right].
\end{align*}
This establishes claim~\eqref{eq:8} and concludes the proof of Corollary~\ref{cor:1}.
\end{proof}

Corollary~\ref{cor:1}~(a) extends Theorem~3.2 of~\citet{MR3278931} by showing that inequality~\eqref{eq:3} with negative powers holds for vectors $\bb{X}$ with multivariate trace-Wishart distribution, and not only in the special case of the multivariate gamma distribution in the sense of \citet{MR44790}.

Corollary~\ref{cor:1}~(b) extends a result of~\citet{MR3608204}. Corollary~1.1~(ii) in these authors' paper states that if $\bb{Z} = (Z_1, \ldots, Z_d)$ is a centered Gaussian random vector and $r_1, \ldots, r_d \in [0,1/2]$ are any reals, then
\begin{equation}
\label{eq:9}
\EE \left( \prod_{i=1}^d e^{-|Z_i|^{2 r_i} } \right) \geq \prod_{i=1}^{d} \EE \left( e^{-|Z_i|^{2 r_i}} \right) .
\end{equation}
Inequality~\eqref{eq:9} can easily be deduced through successive applications of inequality~\eqref{eq:7} by taking $2\alpha = 1$, $p_1 = \dots = p_d = 1$ and by replacing $X_1, \ldots, X_d$ with $Z_1^2, \ldots, Z_d^2$. In fact, note that the range of the exponents $r_1, \ldots, r_d$ in inequality~\eqref{eq:7} is twice as large as their range in inequality~\eqref{eq:9}. Therefore, inequality~\eqref{eq:9} is not only valid for the Gaussian distribution but also for the much more general multivariate trace-Wishart distribution and for a larger range of exponents than derived by \citet{MR3608204}.

Finally, Corollary~\ref{cor:1}~(c) is a matrix-variate extension of Lemma~\ref{lemma:1}, from the initial case $n = 1$ with $A_i = (t_i)_{1\times 1}$ for every integer $i \in \{1, \ldots, d \}$ to a sequence $A_1, \ldots, A_d$ of SPSD matrices of size $n \times n$ for any integer $n \in \N$.

\section{Extension of a result of  \texorpdfstring{\citet{MR3608204}}{Liu et al. (2017)} concerning inequality~(1)\label{sec:4}}

\citet{MR3608204}, whose work was mentioned earlier in relation to inequality~\eqref{eq:9}, state at the bottom of p.~2 of their paper that if $(Z_1, Z_2)$ is a centered Gaussian random pair, then, for all reals $q_1, q_2 \in [0,1]$, one has
\begin{equation}
\label{eq:10}
\EE ( |Z_1|^{2 q_1} |Z_2|^{2 q_2} ) \geq \EE ( |Z_1|^{2 q_1} ) \EE (|Z_2|^{2 q_2} ).
\end{equation}
This is a special case of the strong Gaussian product inequality conjecture, inequality~\eqref{eq:1}. If the covariance matrix is singular, then an even stronger inequality holds; see Proposition~3.1~(ii) of \citet{MR4471184}. The $d$-dimensional analog of inequality \eqref{eq:10} was shown to hold by \citet{MR2385646} for $q_1 = \dots = q_d = 1$.

In this section, inequality~\eqref{eq:10} is extended in multiple ways using the notion of Bernstein function recalled below from Definition~3.1 and Theorem~3.2 of~\citet{MR2978140}.

\begin{definition}[Bernstein function]
A map $f: [0, \infty) \to \R$ is said to be a Bernstein function (or Laplace exponent) on the interval $[0, \infty)$ if and only if there exist reals $a, b \in [0, \infty)$ and a measure $\mu$ on $(0, \infty)$ with $\int \min (1, t) \mu(\rd t) < \infty$ such that, for every real $\lambda \in [0, \infty)$,
$$
f (\lambda) = a + b \lambda + \int_{(0,\infty)} (1 - e^{-\lambda t}) \mu(\rd t).
$$
Such a map $f$ is referred to as the Bernstein function with triplet $(a, b, \mu)$, where the conditions on $a$, $b$ and $\mu$ are kept implicit.
\end{definition}

Equivalently, a map $f: [0, \infty) \to \R$ is a Bernstein function if and only if $f$ is nonnegative, continuous on $[0, \infty)$, infinitely differentiable on $(0, \infty)$, and its first derivative $f^\prime$ is completely monotone on $(0, \infty)$, i.e., for every integer $n \in \N$ and every real $\lambda \in (0, \infty)$, one has $(-1)^{n-1} f^{(n)}(\lambda) \geq 0$.

The following result is akin to Theorem~5.A.4 in \citet{MR2265633} but seemingly new.

\begin{theorem}
\label{thm:2}
Let $(X_1, X_2)$ and $(X_1^{\star}, X_2^{\star})$ be two random pairs with nonnegative entries and identical margins. Suppose that the random variables $X_1^{\star}$ and $X_2^{\star}$ are independent, and that $(X_1, X_2) \preceq_{\mathrm{Lt}} (X_1^{\star}, X_2^{\star})$. If $f$ and $g$ are Bernstein functions with triplets $(a_1,0,\mu_1)$ and $(a_2,0,\mu_2)$, respectively, then
\begin{equation}
\EE \{ f(X_1) g(X_2)  \} \geq \EE  \{ f(X_1^{\star}) \} \EE \{ g(X_2^{\star}) \}.
\end{equation}
\end{theorem}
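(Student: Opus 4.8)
The plan is to use the L\'evy--Khintchine integral representation of the Bernstein functions $f$ and $g$ to reduce the claimed inequality to the defining property of the multivariate Laplace transform order. Since both triplets have vanishing linear coefficient, I would write, for every real $\lambda \in [0,\infty)$,
\[
f(\lambda) = a_1 + \int_{(0,\infty)} (1 - e^{-\lambda s})\, \mu_1(\rd s), \qquad g(\lambda) = a_2 + \int_{(0,\infty)} (1 - e^{-\lambda u})\, \mu_2(\rd u),
\]
substitute $\lambda = X_1$ and $\lambda = X_2$, and expand the product $f(X_1) g(X_2)$ into a constant term $a_1 a_2$, two terms involving a single factor $1 - e^{-\lambda \cdot}$, and a double-integral cross term. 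Every integrand in sight is nonnegative, so Tonelli's theorem permits interchanging expectation and integration throughout, all quantities being well defined in $[0, \infty]$.

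Next I would take expectations and invoke the identical-margins hypothesis, i.e., $X_1 \stackrel{\mathrm{d}}{=} X_1^{\star}$ and $X_2 \stackrel{\mathrm{d}}{=} X_2^{\star}$. Writing $F_1 = \int_{(0,\infty)} (1 - \EE e^{-s X_1})\, \mu_1(\rd s)$ and $G_2 = \int_{(0,\infty)} (1 - \EE e^{-u X_2})\, \mu_2(\rd u)$, the expectation of the left-hand side decomposes as $a_1 a_2 + a_1 G_2 + a_2 F_1 + D$, where $D = \EE \int\!\int (1 - e^{-s X_1})(1 - e^{-u X_2})\, \mu_1(\rd s)\, \mu_2(\rd u)$. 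On the other hand, expanding $\EE\{f(X_1^{\star})\}\,\EE\{g(X_2^{\star})\} = (a_1 + F_1)(a_2 + G_2)$ and using the independence of $X_1^{\star}$ and $X_2^{\star}$ to turn the product of the marginal integrals into a single expectation of a product, the right-hand side decomposes as $a_1 a_2 + a_1 G_2 + a_2 F_1 + D^{\star}$, with $D^{\star}$ the same double integral as $D$ but with $(X_1, X_2)$ replaced by $(X_1^{\star}, X_2^{\star})$. The three leading terms coincide because the margins agree, so it remains only to show $D \geq D^{\star}$.

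For this last step, I would fix $s, u \in (0,\infty)$ and simplify the inner integrand $\EE[(1 - e^{-s X_1})(1 - e^{-u X_2})] - \EE[(1 - e^{-s X_1^{\star}})(1 - e^{-u X_2^{\star}})]$. Expanding both products and cancelling the terms $\EE e^{-s X_1}$ and $\EE e^{-u X_2}$ against their starred counterparts (again by identical margins), this difference collapses to $\EE e^{-(s X_1 + u X_2)} - \EE e^{-(s X_1^{\star} + u X_2^{\star})}$, which is nonnegative for every $(s, u) \in (0,\infty)^2$ by Definition~\ref{def:2} and the hypothesis $(X_1, X_2) \preceq_{\mathrm{Lt}} (X_1^{\star}, X_2^{\star})$. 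Since $\mu_1$ and $\mu_2$ are nonnegative measures, integrating this nonnegative integrand yields $D - D^{\star} \geq 0$, and the theorem follows.

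The only real care needed lies in the measure-theoretic bookkeeping when the common term $a_1 a_2 + a_1 G_2 + a_2 F_1$ is infinite: in that case the left-hand side is already $+\infty$ by the decomposition above, so the inequality holds trivially, whereas if the common term is finite one may legitimately cancel it and conclude from $D \geq D^{\star}$. I expect this to be the main (though minor) obstacle; the substantive content of the argument is entirely captured by the one-line reduction to the Laplace transform order, and no further analytic estimates are required.
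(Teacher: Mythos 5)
Your proposal is correct and follows essentially the same route as the paper's proof: both expand $f$ and $g$ via their L\'evy--Khintchine representations, reduce everything by Tonelli to the cross term, and establish the pointwise inequality $\EE\{(1-e^{-sX_1})(1-e^{-tX_2})\} \ge \EE(1-e^{-sX_1^{\star}})\,\EE(1-e^{-tX_2^{\star}})$ using the Laplace transform order together with identical margins and independence of the starred pair (the paper's inequality~\eqref{eq:11} is exactly your collapse to $\EE e^{-(sX_1+tX_2)} - \EE e^{-(sX_1^{\star}+tX_2^{\star})} \ge 0$). Your explicit handling of the case where the common term is infinite is a minor bookkeeping addition the paper leaves implicit.
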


\begin{proof}[\bf Proof of Theorem~\ref{thm:2}]
Given that $(X_1, X_2) \preceq_{\mathrm{Lt}} (X_1^{\star}, X_2^{\star})$, one has that, for any given reals $s, t \in [0,\infty)$,
\begin{align}
\label{eq:11}
\EE \big \{ \big( 1 - e^{-s X_1} \big) \big( 1 - e^{-t X_2} \big) \big\}
& = 1 - \EE \big( e^{-s X_1} \big) - \EE \big( e^{-t X_2} \big) + \EE \big( e^{-s X_1} e^{-t X_2} \big) \notag \\
& \geq 1 - \EE \big( e^{-s X_1} \big) - \EE \big( e^{-t X_2} \big) + \EE \big( e^{-s X_1^{\star}} e^{-t X_2^{\star}} \big) \notag \\
& = 1 - \EE \big( e^{-s X_1^{\star}} \big) - \EE \big( e^{-t X_2^{\star}} \big) + \EE \big( e^{-s X_1^{\star}} \big) \EE \big( e^{-t X_2^{\star}} \big) \notag \\
&= \EE \big\{ \big( 1 - e^{-s X_1^{\star}} \big) \big\} \EE \big\{ \big( 1 - e^{-t X_2^{\star}} \big) \big\},
\end{align}
owing to the fact that the pairs $(X_1, X_2)$ and $(X_1^{\star}, X_2^{\star})$ have identical margins and that the random variables $X_1^{\star}$ and $X_2^{\star}$ are independent. Next observe that if $f$ and $g$ are Bernstein functions as specified in the statement of the theorem, then the expectation $\EE \left\{ f(X_1) g(X_2) \right\}$ can be written as
\begin{multline*}
a_1 a_2 + a_1 \int_{(0,\infty)} \EE \left(1 - e^{-t X_2} \right) \mu_2(\rd t) + a_2 \int_{(0,\infty)} \EE \left( 1 - e^{-s X_1} \right) \mu_1(\rd s) \\
+ \int_{(0,\infty)^2} \EE \left\{ \left( 1 - e^{-s X_1} \right) \left(1 - e^{-t X_2} \right) \right\} \mu_1(\rd s) \mu_2(\rd t).
\end{multline*}
Given that the random pairs $(X_1, X_2)$ and $(X_1^{\star}, X_2^{\star})$ have identical margins by assumption, and in view of inequality~\eqref{eq:11}, the above expression is bounded from below by
\begin{multline*}
a_1 a_2 + a_1 \int_{(0,\infty)} \EE \big( 1 - e^{-t X_2^{\star}} \big) \mu_2(\rd t) + a_2 \int_{(0,\infty)} \EE \big( 1 - e^{-s X_1^{\star}} \big) \mu_1(\rd s) \\
+ \int_{(0,\infty)^2} \EE \big( 1 - e^{-s X_1^{\star}} \big) \EE \big( 1 - e^{-t X_2^{\star}} \big) \mu_1(\rd s) \mu_2(\rd t),
\end{multline*}
which is the same as $\EE \left\{ f(X_1^{\star}) \right\} \EE \left\{ g(X_2^{\star}) \right\} $. Therefore, the claim is proved.
\end{proof}

As shown next, Theorem~\ref{thm:2} makes it possible to relax the distributional assumption in inequality~\eqref{eq:10}. However, it remains an open question whether a weak version of the GPI, Eq.~\eqref{eq:10}, is true or false in dimension $d \geq 3$, and whether or not there exists a higher dimensional version of Theorem~\ref{thm:2}.

\begin{corollary}
\label{cor:2}
Let $(X_1, X_2)$ be a random pair with distribution $\mathrm{TW}_{p_1,p_2} (\alpha,\Sigma)$ for some $2\alpha \in \N \cup (p - 1, \infty)$, integers $p_1$, $p_2\in \N$, and SPSD matrix $\Sigma$ of size $p\times p$ with $p = p_1 + p_2$. Then, for all reals $q_1,q_2\in [0,1]$, one has
\begin{equation}
\label{eq:12}
\EE (X_1^{q_1} X_2^{q_2} ) \geq \EE (X_1^{q_1} ) \EE (X_2^{q_2} ).
\end{equation}
\end{corollary}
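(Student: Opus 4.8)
The plan is to exhibit an independent ``decoupled'' companion pair that dominates $(X_1,X_2)$ in the multivariate Laplace transform order, and then feed this into Theorem~\ref{thm:2} with power functions. First I would set $\Sigma^{\star} = \mathrm{diag}(\Sigma_{11},\Sigma_{22})$, where $\Sigma_{11}$ and $\Sigma_{22}$ are the two diagonal blocks of $\Sigma$ of sizes $p_1 \times p_1$ and $p_2 \times p_2$, and let $(X_1^{\star},X_2^{\star}) \sim \mathrm{TW}_{p_1,p_2}(\alpha,\Sigma^{\star})$. Applying Lemma~\ref{lemma:1} with $d = 2$ and $d_1 = 1$ yields $(X_1,X_2) \preceq_{\mathrm{Lt}} (X_1^{\star},X_2^{\star})$. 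Because $\Sigma^{\star}$ is block diagonal, the determinant in~\eqref{eq:4} factorizes as $|\mathrm{I}_{p_1} + t_1 \Sigma_{11}|^{-\alpha}\, |\mathrm{I}_{p_2} + t_2 \Sigma_{22}|^{-\alpha}$, so the Laplace transform of $(X_1^{\star},X_2^{\star})$ is a product of a function of $t_1$ alone and a function of $t_2$ alone; hence $X_1^{\star}$ and $X_2^{\star}$ are independent. Moreover, by the marginal embedding property~\eqref{eq:5}, $X_1$ and $X_1^{\star}$ are both distributed as $\mathrm{TW}_{p_1}(\alpha,\Sigma_{11})$ and $X_2$, $X_2^{\star}$ are both $\mathrm{TW}_{p_2}(\alpha,\Sigma_{22})$, so the two pairs have identical margins; their entries are nonnegative since they are traces of diagonal blocks of a positive semidefinite Wishart matrix. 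This verifies every hypothesis of Theorem~\ref{thm:2}.

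The second step is to recognize the power functions as admissible Bernstein functions. For $q \in (0,1)$, the map $t \mapsto t^q$ is a Bernstein function whose triplet has no drift term, since
\[
t^q = \frac{q}{\Gamma(1-q)} \int_{(0,\infty)} (1 - e^{-t s})\, s^{-1-q}\, \rd s,
\]
i.e.\ its triplet is $(0,0,\mu_q)$ with $\mu_q(\rd s) = \{q/\Gamma(1-q)\}\, s^{-1-q}\, \rd s$; the constant map $t \mapsto 1 = t^0$ has triplet $(1,0,0)$, also with zero drift. Thus, for any $q_1,q_2 \in [0,1)$, Theorem~\ref{thm:2} applied with $f(t) = t^{q_1}$ and $g(t) = t^{q_2}$ gives $\EE(X_1^{q_1} X_2^{q_2}) \geq \EE\{(X_1^{\star})^{q_1}\}\, \EE\{(X_2^{\star})^{q_2}\}$, and the identical-margins property turns the right-hand side into $\EE(X_1^{q_1})\, \EE(X_2^{q_2})$. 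This establishes~\eqref{eq:12} on the half-open square $[0,1)^2$.

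The main obstacle is the boundary case where $q_1 = 1$ or $q_2 = 1$: the identity map $t \mapsto t$ is a Bernstein function with triplet $(0,1,0)$ whose drift term $b = 1$ is nonzero, so Theorem~\ref{thm:2} does not apply to it directly. I would resolve this by a continuity argument. Since $X_1$ and $X_2$ are traces of diagonal blocks of a Wishart matrix, they possess finite moments of all orders, so in particular $\EE(X_1 X_2) < \infty$. Using the elementary bound $t^q \leq 1 + t$, valid for all $t \geq 0$ and $q \in [0,1]$, one dominates $X_1^{q_1} X_2^{q_2}$ by the integrable random variable $(1 + X_1)(1 + X_2)$ uniformly in $(q_1,q_2)$, so dominated convergence makes the maps $(q_1,q_2) \mapsto \EE(X_1^{q_1} X_2^{q_2})$ on $[0,1]^2$ and $q \mapsto \EE(X_i^{q})$ on $[0,1]$ continuous. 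Letting $q_1 \uparrow 1$ and then $q_2 \uparrow 1$ in the inequality already proved on $[0,1)^2$ extends~\eqref{eq:12} to the full closed square $[0,1]^2$, completing the proof.
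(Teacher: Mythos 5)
Your proposal follows the same core route as the paper: build $\Sigma^{\star} = \mathrm{diag}(\Sigma_{11},\Sigma_{22})$, invoke Lemma~\ref{lemma:1} to get $(X_1,X_2) \preceq_{\mathrm{Lt}} (X_1^{\star},X_2^{\star})$, check the margins/independence hypotheses, and apply Theorem~\ref{thm:2} with $f(t)=t^{q_1}$, $g(t)=t^{q_2}$. Where you diverge is instructive: you correctly noticed that the endpoint $q=1$ is \emph{not} covered by a direct application of Theorem~\ref{thm:2}, because the identity map has Bernstein triplet $(0,1,0)$, i.e.\ nonzero drift, whereas the theorem requires triplets of the form $(a,0,\mu)$. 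The paper's own proof glosses over this point: it asserts that $t \mapsto t^q$ is a Bernstein function with $a=b=0$ for every $q\in[0,1]$, but the integral representation $t^q = \{q/\Gamma(1-q)\}\int_{(0,\infty)}(1-e^{-ts})s^{-1-q}\,\rd s$ it implicitly relies on is valid only for $q\in(0,1)$; at $q=1$ the drift is $b=1$ (and at $q=0$ one has $a=1$, which is harmless since Theorem~\ref{thm:2} permits arbitrary $a$). Your patch — prove the inequality on $[0,1)^2$, then pass to the boundary by dominated convergence using the bound $t^q \le 1+t$ and the finiteness of all moments of trace-Wishart variables (their Laplace transform extends analytically to a neighborhood of the origin) — is correct and closes this gap rigorously. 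An alternative repair would be to extend Theorem~\ref{thm:2} itself to nonzero drift via monotone convergence, using $(1-e^{-sx})/s \uparrow x$ as $s \downarrow 0$, but your limiting argument at the level of the corollary is equally clean. In short: same strategy as the paper, executed more carefully at the boundary.
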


\begin{proof}[\bf Proof of Corollary~\ref{cor:2}]
Let $\Sigma_{11}$ and $\Sigma_{22}$ be the two diagonal blocks of size $p_1 \times p_1$ and $p_2 \times p_2$ within $\Sigma$. Let $(X_1^{\star}, X_2^{\star})$ be a random pair with distribution $\mathrm{TW}_{p_1,p_2}(\alpha, \Sigma^{\star})$ with $\Sigma^{\star} = \mathrm{diag} (\Sigma_{11}, \Sigma_{22})$. It is known from Lemma~\ref{lemma:1} that $(X_1, X_2) \preceq_{\mathrm{Lt}} (X_1^{\star},X_2^{\star})$. Moreover, for any real $q \in [0,1]$, the map $t \mapsto t^q$ is a Bernstein function on $[0, \infty)$ with $a = b = 0$ and some nonnegative measure $\mu$; see, e.g., Eq.~(1) of \citet{MR2978140}. Therefore, the claim \eqref{eq:12} is a consequence of Theorem~\ref{thm:2}.
\end{proof}

\section{Closing comments\label{sec:5}}

The strong Gaussian product inequality conjecture, which appears in~\eqref{eq:1}, is an extension of a weaker conjecture to the effect that, for any centered Gaussian random vector $\bb{Z} = (Z_1, \ldots, Z_d)$ and all nonnegative reals $\alpha_1, \ldots, \alpha_d \in [0, \infty)$, one has
\begin{equation}
\label{eq:13}
\EE \left( \prod_{i=1}^d |Z_i|^{2 \alpha_i} \right) \geq \prod_{i=1}^d \EE \left( |Z_i|^{2 \alpha_i} \right).
\end{equation}
Notice that the random vector $\bb{Z}$ can be singular here.

This conjecture was formulated originally by \citet{MR2886380}. It was proved recently by \citet{MR4466643} for all nonnegative integers $\alpha_1, \ldots, \alpha_d \in \N_0$ under the assumption that the covariance matrix $\Sigma$ is completely positive, using a combinatorial approach closely related to the complete monotonicity of multinomial probabilities shown by \citet{MR3825458} and \citet{MR4201158}. The result of Genest and Ouimet was extended shortly after to all covariance matrices with nonnegative entries by \citet{MR4445681}, using an Isserlis--Wick type formula of \citet{MR3324071,arXiv:1705.00163} as already mentioned in Section~\ref{sec:1}.

Inequality~\eqref{eq:13} was also proved in dimension $d = 3$ for all nonnegative integers $\alpha_1 = \alpha_2 \in \N_0$ and $\alpha_3 \in \N_0$ by~\citet{MR4052574}. Their result, labeled Theorem~3.2 in their paper, was derived using a dimension reduction argument and estimates on Gaussian hypergeometric functions. In particular, in the special case $\alpha_1 = \alpha_2 = \alpha_3$, their result proves the $3$-dimensional version of the following even weaker statement: for every integer $m \in \N_0$,
\begin{equation}
\label{eq:14}
\EE \left( \prod_{i=1}^d |Z_i|^{2 m} \right) \geq \prod_{i=1}^d \EE ( |Z_i|^{2 m} ).
\end{equation}

The validity of inequality~\eqref{eq:14} for centered Gaussian random vectors was first conjectured by~\citet{MR1636556} in the context of the real polarization constant problem in functional analysis. It is \citet{MR2385646} who made the connection between the two problems and expressed the conjecture in the form \eqref{eq:14}.

In the form \eqref{eq:14}, the Gaussian product inequality conjecture is known to imply the real polarization constant conjecture \citep{MR3425898}. It is also related to the so-called $U$-conjecture to the effect that if $P$ and $Q$ are two non-constant polynomials on $\mathbb{R}^d$ such that the random variables $P(\bb{Z})$ and $Q(\bb{Z})$ are independent, then there exist an orthogonal transformation $L$ on $\mathbb{R}^d$ and an integer $d_1 \in \{ 1, \ldots , d - 1 \}$ such that $P \circ L$ is a function of $(Z_1, \ldots, Z_{d_1})$ and $Q \circ L$ is a function of $(Z_{d_1+1}, \ldots, Z_d)$; see, e.g., \citet{MR0346969} or \citet{MR3425898} and references therein.

Inequality \eqref{eq:14} for Gaussian random vectors was proved for $m = 1$ in every dimension $d \in \N$ by \citet{MR2385646} and in dimension $d = 3$ for every integer $m \in \N_0$ by \citet{MR4052574} using Gaussian hypergeometric functions; see their Theorem~1.1. Another extension, valid in dimension $d = 3$ and for all $(\alpha_1, \alpha_2, \alpha_3) \in \{1\} \times \{2, 3\} \times \N_0$ was recently obtained by~\citet{MR4445681} using a brute-force combinatorial approach. Finally, using a sums-of-squares approach along with extensive symbolic/numerical computations in \texttt{Macaulay2} and \texttt{Mathematica}, \citet{arXiv:2205.02127} recently stated in their Theorems~4.1~and~4.2 the validity of inequality~\eqref{eq:13} when $(d, \alpha_1, \alpha_2, \alpha_3) \in \{3\} \times \N_0 \times \{3\} \times \{2\}$ and $(d, \alpha_1, \alpha_2, \alpha_3, \alpha_4) \in \{4\} \times \N_0 \times \{2\} \times \{2\} \times \{2\}$, respectively.

\section*{Funding}

Genest's research is funded in part by the Canada Research Chairs Program (Grant no.~950--231937) and the Natural Sciences and Engineering Research Council of Canada (RGPIN--2016--04720). Ouimet benefited from postdoctoral fellowships from the Natural Sciences and Engineering Research Council of Canada and the Fond qu\'eb\'ecois de la recherche -- Nature et technologies (B3X supplement and B3XR). Ouimet is currently supported by a CRM-Simons postdoctoral fellowship from the Centre de recherches math\'ematiques (Montr\'eal, Canada) and the Simons Foundation.


\end{document}